\documentclass[11pt]{amsart}
\usepackage{color}
\usepackage{amsmath}
\usepackage{amssymb}
\usepackage{amsfonts}
\usepackage[latin1]{inputenc}

\setcounter{MaxMatrixCols}{10}

\DeclareMathSymbol{\subsetneqq}{\mathbin}{AMSb}{36}

\vsize 29.7cm \hsize 19cm \topmargin 0.23cm \textheight 21cm
\textwidth 16.0cm \oddsidemargin 0.0cm \evensidemargin 0.5cm
\textwidth 16.0cm \oddsidemargin 0.0cm \evensidemargin 0.5cm
\newtheorem{theorem}{Théorème}
\theoremstyle{plain}

\newtheorem{corollary}{Corollary}

\newtheorem{lemma}{Lemme}

\newtheorem{remark}{Remarque}

\numberwithin{equation}{section}
\input{tcilatex}

\begin{document}
\title[contrôle de l'explosion des solutions des équations de Navier-Stokes]{%
Rôle de l'espace de Besov $\mathbf{B}_{\infty }^{-1,\infty }$dans le contrô%
le de l'explosion éventuelle en temps fini des solutions régulières des é%
quations de Navier-Stokes}
\author{Ramzi May}
\address{Département des mathématiques, Université d'Evry, boulevard F.
Mitterrand, 91025 Evry Cedex, France}
\email{Ramzi.May@fsb.rnu.tn}
\keywords{Navier-Stokes equations, Besov spaces, Bony's paraproduct}
\maketitle

\noindent \textbf{Résumé. }Soit $u\in C([0,T^{\ast }[;L^{n}(\mathbb{R}%
^{n})^{n})$ une solution maximale des équations de Navier-Stokes. Nous
montrons que $u$ est $C^{\infty }$ sur $]0,T^{\ast }[\times \mathbb{R}^{n}$
et qu'il existe une constante $\varepsilon _{\ast }>0,$ qui ne dépend que de
$n,$ telle que si $T^{\ast }<\infty $ alors, pour toute $\omega \in S(%
\mathbb{R}^{n})^{n},$ on a $\overline{\lim_{t\rightarrow T^{\ast }}}\left\|
u(t)-\omega \right\| _{\mathbf{B}_{\infty }^{-1,\infty }}\geq \varepsilon
_{\ast }.\smallskip $

\begin{center}
\textbf{The role of the Besov space }$\mathbf{B}_{\infty }^{-1,\infty }$%
\textbf{\ in the control of the eventual explosion in finite time of the
regular solutions of the Navier-Stokes equations }
\end{center}

\smallskip

\noindent \textbf{Abstract.} Let $u\in C([0,T^{\ast }[;L^{n}(\mathbb{R}%
^{n})^{n})$ be a maximal solution of the Navier-Stokes equations. We prove
that $u$ is $C^{\infty }$ on $]0,T^{\ast }[\times \mathbb{R}^{n}$ and there
exists a constant $\varepsilon _{\ast }>0$, which depends only on $n,$ such
that if $T^{\ast }$ is finite then, for all $\omega \in S(\mathbb{R}%
^{n})^{n},$ we have $\overline{\lim_{t\rightarrow T^{\ast }}}\left\Vert
u(t)-\omega \right\Vert _{\mathbf{B}_{\infty }^{-1,\infty }}\geq \varepsilon
_{\ast }.$

\section{Introduction et énoncé des résultats}

Nous utilisons dans cette note les notations suivantes: on désigne par $n $
un entier fixe supérieur à $3.$ Tous les espaces fonctionnels considérés
sont définis sur l'espace $\mathbb{R}^{n}.$ Si $(X,\left\| .\right\| )$ est
un espace de Banach tel que $S(\mathbb{R}^{n})\hookrightarrow
X\hookrightarrow S^{\prime }(\mathbb{R}^{n}),$ on note par $\mathbf{X}$
l'espace produit $X^{n},$ on pose $\mathbf{X}_{\sigma }=\{f\in \mathbf{X;\,}%
div(f)=0\}$ et on désigne par $\mathbf{\tilde{X}}$ l'adhérence de $S(\mathbb{%
R}^{n})^{n}$ dans $\mathbf{X.}$

Soient $u_{0}\in \mathbf{L}_{\sigma }^{n}$ et $u\in C([0,T^{\ast }[;\mathbf{L%
}_{\sigma }^{n})$ la solution maximale des équations intégrales de
Navier-Stokes associées à la donnée initale $u_{0}$%
\begin{equation}
u(t)=e^{t\Delta }u_{0}+\mathbb{L}(\mathbb{P}\nabla .(u\otimes u))(t),
\tag{NSI}
\end{equation}
où $\mathbb{P}$ est le projecteur de Leray et $\mathbb{L}$ est l'opérateur
linéaire défini par
\begin{equation}
\mathbb{L(}f\mathbb{)(}t\mathbb{)=-}\int_{0}^{t}e^{(t-s)\Delta }f(s)ds.
\label{op}
\end{equation}
Pour l'existence et l'unicité de la solution $u,$ on pourra consulter les réf%
érences (\cite{Fur}, \cite{Lem}, \cite{Mey}). En ce qui concerne la régularit%
é de $u,$ P.G.Lemarié-Rieusset \cite{Lem} a montré, en utilisant le critère
de Caffarelli, Kohn et Nirenberg, que la solution $u $ appartient à l'espace
$C(]0,T^{\ast }[,\mathbf{L}^{\infty })$ et qu'elle est, par conséquent, de
classe $C^{\infty }$ sur l'ouvert $Q_{T^{\ast }}=]0,T^{\ast }[\times \mathbb{%
R}^{n}.$ Nous présentons, dans ce papier, une autre démonstration é%
lementaire et directe de la régularité de $u.$

Supposons, dorénavant, que notre solution $u$ explose en temps fini i.e $%
T^{\ast }<\infty $, (nous ne savons pas, jusqu'à présent, si un tel phénomé%
ne est possible ou non). Notre objectif principal est d'étudier le
comportement de $u(t)$ au voisinage de $T^{\ast }.$ Rappelons que Y. Giga
\cite{Gig} a prouvé que les normes $\left\| u(t)\right\| _{p},$ $n<p\leq
\infty ,$ tendent vers l'infini avec une vitesse supérieure à $C_{p}(T^{\ast
}-t)^{\frac{1}{2}(\frac{n}{p}-1)}.$ Dans le cas limite où $p=n,$ H. Sohr et
W.Von Wahl [10] ont montré que $u$ ne peut pas être uniformement continue
sur $[0,T^{\ast }[$ à valeurs dans l'espac $\mathbf{L}^{n}.$ H. Kozono et H.
Sohr \cite{Koz} ont précisé ce résultat en montrant l'existence d'une
constante $\varepsilon _{KS}>0$, qui ne dépend que de $n,$ telle que si $%
\lim_{t\rightarrow T^{\ast }}u(t)=u^{\ast }$ dans $\mathbf{L}^{n}$ faible,
alors $\overline{\lim_{t\rightarrow T^{\ast }}}\left\| u(t)\right\|
_{n}^{n}-\left\| u^{\ast }\right\| _{n}^{n}\geq \varepsilon _{KS}.$ Comme
conséquence, ils ont prouvé que $u$ n'appartient pas à l'espace $%
BV([0,T^{\ast }[,\mathbf{L}^{n}).$ Récemment, L. Escauriaza, G. Seregin et
V. \v{S}verák \cite{Esc} ont montré, qu'en trois dimensions d'espace ($n=3$%
), si la solution $u$ appartient en plus à l'espace d'énegie de Leray-Hopf $%
\mathcal{L}_{T^{\ast }}=L_{T^{\ast }}^{\infty }(\mathbf{L}^{2})\cap
L_{T^{\ast }}^{2}(\mathbf{H}^{1}),$ alors $\overline{\lim_{t\rightarrow
T^{\ast }}}\left\| u(t)\right\| _{3}=\infty .$ Dans cette note, nous dé%
montrons le théorème suivant qui précise le comportement de $u(t)$ dans
l'espace limite de Besov $\mathbf{B}_{\infty }^{-1,\infty }$ (rappelons que $%
\mathbf{L}^{n}\hookrightarrow \mathbf{\tilde{B}}_{\infty }^{-1,\infty }).$
Ce résultat est fort utile dans l'étude de la régularité des solutions
faibles des équations de Navier-Stokes \cite{May}.\medskip

\begin{theorem}
\label{th1}\textit{Il existe une constante }$\varepsilon _{\ast }>0$\textit{%
\ qui ne dépend que de }$n$ \textit{telle que, pour toute }$\omega \in
\mathbf{\tilde{B}}_{\infty }^{-1,\infty },$\textit{\ on a}
\begin{equation}
\overline{\lim_{t\rightarrow T^{\ast }}}\left\Vert u(t)-\omega \right\Vert _{%
\mathbf{\tilde{B}}_{\infty }^{-1,\infty }}\geq \varepsilon _{\ast }.
\label{E1}
\end{equation}
\end{theorem}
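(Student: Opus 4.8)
The plan is to argue by contradiction. Suppose that $T^{\ast}<\infty$ and that for some $\omega\in\tilde{\mathbf{B}}_\infty^{-1,\infty}$ we have $\limsup_{t\to T^\ast}\|u(t)-\omega\|_{\tilde{\mathbf B}_\infty^{-1,\infty}}<\varepsilon_\ast$, where $\varepsilon_\ast$ is a small constant to be chosen. Then there exists $t_0<T^\ast$ such that $\|u(t)-\omega\|_{\tilde{\mathbf B}_\infty^{-1,\infty}}<\varepsilon_\ast$ for all $t\in[t_0,T^\ast)$. The idea is to show that this bound, together with the smoothness of $u$ on $Q_{T^\ast}$ established earlier in the paper, forces $u$ to extend past $T^\ast$ as a regular solution, contradicting maximality.

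The key point is to re-solve the Navier-Stokes equations with a fixed-point argument that is well adapted to the endpoint Besov space $\mathbf B_\infty^{-1,\infty}$. I would set up the integral equation with initial time $t_0$: writing $v(t)=u(t_0+t)$, one has $v(t)=e^{t\Delta}u(t_0)+\mathbb L(\mathbb P\nabla\cdot(v\otimes v))(t)$. Rather than working directly with $v$, I would study the perturbation $w=v-\omega-e^{t\Delta}(u(t_0)-\omega)$ — or more simply, I would use that $e^{t\Delta}(u(t_0)-\omega)$ has small norm in the Koch–Tataru type space $\mathcal X_T$ built on $\mathbf B_\infty^{-1,\infty}$, namely the space of functions with $\sup_{0<t<T}\sqrt t\,\|f(t)\|_\infty$ finite together with the Carleson-measure condition $\sup_{x,R}\big(R^{-n}\int_0^{R^2}\!\!\int_{|y-x|<R}|f|^2\,dy\,dt\big)^{1/2}<\infty$. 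The standard heat-characterization of $\mathbf B_\infty^{-1,\infty}$ gives $\sup_{t}\sqrt t\,\|e^{t\Delta}g\|_\infty\lesssim\|g\|_{\mathbf B_\infty^{-1,\infty}}$, and since $\omega\in S$ (or in the closure of $S$), the $\omega$-contribution over a short time interval is also controlled. The bilinear operator $\mathbb L\mathbb P\nabla\cdot$ is bounded on $\mathcal X_T$ with a norm independent of $T$, so when $\varepsilon_\ast$ is smaller than the threshold of the fixed-point lemma, the equation has a unique solution in a small ball of $\mathcal X_{T^\ast-t_0}$.

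Then I would upgrade: this solution coincides with $u$ on $[t_0,T^\ast)$ by uniqueness (here one uses that $u$, being smooth and bounded on compact subintervals, lies in the relevant space), and moreover $\sqrt{t-t_0}\,\|u(t)\|_\infty$ stays bounded up to $t=T^\ast$. In particular $\|u(t)\|_\infty$ is bounded on, say, $[\tfrac{t_0+T^\ast}{2},T^\ast)$, hence $u(t)$ has a limit in $\mathbf L^\infty\cap \mathbf L^n$ (or at least remains in a fixed ball) as $t\to T^\ast$; standard well-posedness of (NSI) with such an initial datum then produces a solution on $[T^\ast,T^\ast+\delta)$ that continues $u$, contradicting the maximality of $T^\ast$. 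One should be a little careful to phrase the conclusion only in terms of what is actually needed: it suffices that $u\in C([t_0,T^\ast];\mathbf L^n_\sigma)$, which follows from the $\mathcal X$-bound plus the smallness of the tail, and then invoke the basic local existence theorem in $\mathbf L^n$ to contradict maximality.

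The main obstacle is the bilinear estimate for $\mathbb L\mathbb P\nabla\cdot$ on the Koch–Tataru space with constant uniform in $T$, i.e. establishing that both the $\sqrt t\,L^\infty$ component and the Carleson-measure component of $\mathbb L\mathbb P\nabla\cdot(f\otimes g)$ are controlled by the product of the $\mathcal X_T$-norms of $f$ and $g$; this is exactly the technical heart of Koch–Tataru's theorem and is where Bony's paraproduct decomposition (advertised in the keywords) and kernel bounds on $e^{t\Delta}\mathbb P\nabla\cdot$ enter. A secondary, more bookkeeping-type difficulty is checking that $\omega\in\tilde{\mathbf B}_\infty^{-1,\infty}$ rather than in $S$ itself is still good enough — one approximates $\omega$ by a Schwartz function up to an error that can be absorbed into $\varepsilon_\ast$, and uses that on a short time interval $[0,\tau]$ the quantity $\sqrt t\,\|\omega\|_\infty$ and the local Carleson mass of $\omega$ are both $O(\sqrt\tau)$, hence small. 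Once these estimates are in hand, the contradiction argument is routine.
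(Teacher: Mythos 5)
Your overall contradiction scheme (smallness near $T^{\ast}$ forces an $L^{\infty}$ bound, hence continuation, hence contradiction of maximality) matches the paper's, but the technical heart of your argument --- the Koch--Tataru fixed point in the space $\mathcal{X}_{T}$ --- does not go through, and this is a genuine gap rather than a bookkeeping issue. The Carleson-measure component of the Koch--Tataru norm of $e^{t\Delta }g$ is equivalent to the $BMO^{-1}$ norm of $g$, and $BMO^{-1}=F_{\infty }^{-1,2}$ is \emph{strictly} contained in $B_{\infty }^{-1,\infty }$: smallness of $\left\Vert u(t_{0})-\omega \right\Vert _{\mathbf{B}_{\infty }^{-1,\infty }}$ gives no control whatsoever on that component, so the first Picard iterate is not small in $\mathcal{X}_{T}$ and the contraction does not close. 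You cannot escape by discarding the Carleson component either: the bilinear operator $\mathbb{L}\mathbb{P}\nabla \cdot $ is \emph{unbounded} on the pure Kato space of functions with $\sup_{t}\sqrt{t}\left\Vert f(t)\right\Vert _{\infty }<\infty $, since the time integral $\int_{0}^{t}(t-s)^{-1/2}s^{-1}ds$ diverges at $s=0$; this is precisely why Koch--Tataru is a $BMO^{-1}$ theorem and why no small-data fixed point adapted to $\mathbf{B}_{\infty }^{-1,\infty }$ alone can work.

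The paper circumvents this by never solving a fixed-point problem with rough data. It runs an a priori estimate on $w=u(\cdot +T^{\ast }-\delta )$, already known to be smooth by its first step, in the high-regularity norm $\sup_{0<t<\delta ^{\prime }}\left\Vert w(t)\right\Vert _{\mathbf{\tilde{B}}_{\infty }^{s+1,\infty }}$. The quadratic term is split by the weakened Bony paraproducts as $\pi _{j}\left[ (w-\omega )\otimes w\right] +\pi _{j}\left[ \omega \otimes w\right] $; Lemme \ref{Lem3} only ever multiplies the rough, uniformly small factor $w(t)-\omega \in \mathbf{B}_{\infty }^{-1,\infty }$ against the very smooth factor $w(t)\in \mathbf{\tilde{B}}_{\infty }^{s+1,\infty }$, landing in $\mathbf{\tilde{B}}_{\infty }^{s,\infty }$, after which $\mathbb{P}\nabla $ and the full two-derivative gain of $\mathbb{L}$ (Lemme \ref{Lem2} with $\alpha =2$, constant independent of $\delta $) return to $\mathbf{\tilde{B}}_{\infty }^{s+1,\infty }$ with a factor $C\varepsilon $, while the $\omega $-term contributes $C\left\Vert \omega \right\Vert _{\infty }\sqrt{\delta }$. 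Choosing $\varepsilon <1/(4C)$ and $\delta $ small absorbs both, giving an $L^{\infty }$ bound on $u$ near $T^{\ast }$ that contradicts the necessary blow-up of $\left\Vert u(t)\right\Vert _{\infty }$ established in the second step. Note finally that the paper exploits the smallness of $u(t)-\omega $ for \emph{all} $t$ near $T^{\ast }$ inside the Duhamel integral, not merely at the initial time as your fixed point would; any repair of your argument would have to do the same.
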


\begin{remark}
nous montrons dans \cite{May} que ce résultat de persistance reste vrai
lorsqu'on remplace l'espace $\mathbf{L}^{n}$ par d'autres espaces
fonctionels tels que les espaces de Lebesgue $\mathbf{L}^{p}$ (avec $p>n$)
ou l'espace de Sobolev $\mathbf{H}^{\frac{d}{2}-1}.$
\end{remark}

Une conséquence immédiate de ce Théorème est le résultat suivant.

\begin{corollary}
\textit{La solution }$u$\textit{\ n'appartient pas à l'espace }$%
BV([0,T^{\ast }[;\mathbf{\tilde{B}}_{\infty }^{-1,\infty }).$
\end{corollary}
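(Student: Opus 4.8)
The plan is to derive the corollary directly from Theorem~\ref{th1} by a short contradiction argument, using only the definition of bounded variation. Recall that a function $v:[0,T^{\ast}[\to \mathbf{\tilde{B}}_{\infty}^{-1,\infty}$ belongs to $BV([0,T^{\ast}[;\mathbf{\tilde{B}}_{\infty}^{-1,\infty})$ if its total variation
\begin{equation}
V(v)=\sup \sum_{i=1}^{N}\left\Vert v(t_{i})-v(t_{i-1})\right\Vert _{\mathbf{\tilde{B}}_{\infty}^{-1,\infty}}
\label{totvar}
\end{equation}
is finite, the supremum being taken over all finite subdivisions $0\leq t_{0}<t_{1}<\cdots <t_{N}<T^{\ast}$. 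First I would observe that a $\mathbf{\tilde{B}}_{\infty}^{-1,\infty}$-valued $BV$ function necessarily has a limit at $T^{\ast}$ in that space: the Cauchy criterion for the limit follows from the fact that for $s<t$ the increment $\left\Vert v(t)-v(s)\right\Vert _{\mathbf{\tilde{B}}_{\infty}^{-1,\infty}}$ is bounded by the variation of $v$ over $[s,t[$, which tends to $0$ as $s,t\to T^{\ast}$ because the total variation is finite and hence its tail contribution vanishes.

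Next I would argue by contradiction. Suppose $u\in BV([0,T^{\ast}[;\mathbf{\tilde{B}}_{\infty}^{-1,\infty})$. By the preceding observation there exists $\omega \in \mathbf{\tilde{B}}_{\infty}^{-1,\infty}$ with
\begin{equation}
\lim_{t\rightarrow T^{\ast}}\left\Vert u(t)-\omega \right\Vert _{\mathbf{\tilde{B}}_{\infty}^{-1,\infty}}=0.
\label{cvg}
\end{equation}
Here I must be slightly careful: the completeness of $\mathbf{\tilde{B}}_{\infty}^{-1,\infty}$ guarantees that the Cauchy net $u(t)$ converges, and the limit $\omega$, being a limit in $\mathbf{\tilde{B}}_{\infty}^{-1,\infty}$ of elements of that closed subspace, again lies in $\mathbf{\tilde{B}}_{\infty}^{-1,\infty}$, so it is an admissible choice in Theorem~\ref{th1}.

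Finally, with this particular $\omega$, relation~\eqref{cvg} forces
\begin{equation}
\overline{\lim_{t\rightarrow T^{\ast}}}\left\Vert u(t)-\omega \right\Vert _{\mathbf{\tilde{B}}_{\infty}^{-1,\infty}}=0<\varepsilon _{\ast},
\label{contr}
\end{equation}
which flatly contradicts the lower bound \eqref{E1} of Theorem~\ref{th1}. Hence no such $\omega$ can exist, so $u$ is not $BV$ with values in $\mathbf{\tilde{B}}_{\infty}^{-1,\infty}$, proving the corollary. There is essentially no hard analytic step here; the only point deserving attention is the justification of the first paragraph—that finite total variation yields genuine convergence (not merely boundedness) of $u(t)$ as $t\to T^{\ast}$—which rests on the completeness of the Besov space and the standard fact that the variation over $[s,T^{\ast}[$ tends to zero.
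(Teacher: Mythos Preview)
Your argument is correct: if $u$ were of bounded variation with values in $\mathbf{\tilde B}_\infty^{-1,\infty}$, the tail variation would vanish, $u(t)$ would be Cauchy near $T^\ast$, and by completeness of $\mathbf{\tilde B}_\infty^{-1,\infty}$ it would converge to some $\omega\in\mathbf{\tilde B}_\infty^{-1,\infty}$, contradicting Theorem~\ref{th1}.

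The paper proceeds differently, and more constructively. Rather than invoking the existence of a limit, it applies Theorem~\ref{th1} repeatedly with $\omega=u(t_j)$ (which is admissible because $u(t_j)\in\mathbf L^n\hookrightarrow\mathbf{\tilde B}_\infty^{-1,\infty}$) to build an increasing sequence $(t_j)$ in $]0,T^\ast[$ with $\|u(t_{j+1})-u(t_j)\|_{\mathbf B_\infty^{-1,\infty}}$ bounded below by a fixed positive constant; this forces the total variation to be infinite. Your route is slightly more abstract---it uses completeness of the target space and the general fact that $BV$ functions have one-sided limits---while the paper's route is explicit and avoids the limit discussion altogether by directly exhibiting infinitely many large increments. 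Both derive the corollary in one line from Theorem~\ref{th1}; neither requires any analytic input beyond it.
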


\begin{proof}
En utilisant l'injection de $\mathbf{L}^{n}$ dans $\mathbf{\tilde{B}}%
_{\infty }^{-1,\infty }$, Le Théorème 1 nous permet de construire par ré%
currence une suite strictement croissante $(t_{j})_{j\in \mathbb{N}}$ d'é%
lements de $]0,T^{\ast }[$ telle que, pour tout $j\in \mathbb{N},$ $%
\left\Vert u(t_{j+1})-u(t_{j})\right\Vert _{\mathbf{B}_{\infty }^{-1,\infty
}}\geq \varepsilon _{\ast }.$ D'où le résultat.
\end{proof}

Avant de passer à la démonstration du Théorème 1, nous énonçons quelques ré%
sultats préliminaires. Pour les démonstrations de ces résultats ainsi que
pour les définitions de la décomposition de Littlewood-Paley, du paraproduit
de Bony et des espaces de Besov, nous renvoyons les lecteurs aux références
\cite{Can} et \cite{Lem}.\medskip

Le premier résultat est une version améliorée du théorème d'existence de
Kato \cite{Kat}.

\begin{theorem}[Théorème de Kato]
\textit{Soit} $v_{0}\in \mathbf{L}_{\sigma }^{n}.$ \textit{Il existe un
unique temps maximal} $T_{\ast }\overset{\text{déf}}{=}T_{K}^{\ast
}(v_{0})\in ]0,\infty ]$ \textit{et une unique fonction vectorielle }$v%
\overset{\text{déf}}{=}S_{K}^{\ast }(v_{0})\in \cap _{0<T<T_{\ast }}\mathbf{L%
}_{K}^{n}(Q_{T})$ \textit{solution maximale sur }$]0,T_{\ast }[$\textit{\
des équations }(NSI)\textit{\ associées à la donnée initiale }$v_{0},$%
\textit{\ où }$\mathbf{L}_{K}^{n}(Q_{T})$ \textit{est l'espace des fonctions}
$w\in C([0,T];\mathbf{L}_{\sigma }^{n})$ \textit{telles que} $\sqrt{t}w\in
C([0,T];\mathbf{L}^{\infty })$\textit{\ et} $\lim_{t\rightarrow 0}\sqrt{t}%
\left\Vert w(t)\right\Vert _{\mathbf{\infty }}=0.$\textit{\ Cette solution }$%
v$ \textit{est de classe }$C^{\infty }$\textit{\ sur l'ouvert }$Q_{T_{\ast
}},$\textit{\ plus précisément }$v\in \cap _{j,i\in \mathbb{N}%
}C_{t}^{i}(]0,T_{\ast }[,\mathbf{\tilde{B}}_{\infty }^{j,\infty }).$ \textit{%
Enfin, il existe une constante positive }$\varepsilon _{n},$\textit{\ qui ne
dépend que de }$n,$\textit{\ telle que si pour un réel positif }$T$\textit{\
on a} $\left( 1+\left\Vert v_{0}\right\Vert _{\mathbf{n}}\right) \sup_{0<t<T}%
\sqrt{t}\left\Vert e^{t\Delta }v_{0}\right\Vert _{\mathbf{\infty }}\leq
\varepsilon _{n}$ \textit{alors }$T_{K}^{\ast }(v_{0})\geq \inf (1,T).$
\end{theorem}

Une conséquence directe de ce théorème est le lemme principal suivant.

\begin{lemma}
\label{Lem1}\textit{Soit }$v_{0}\in \mathbf{L}_{\sigma }^{n}.$ \textit{On
pose }$v=S_{K}^{\ast }(v_{0})$\textit{\ et }$T_{K}^{\ast }=T_{K}^{\ast
}(v_{0}).$\textit{\ Alors pour tout }$t_{0}\in ]0,T_{K}^{\ast }[$\textit{\
on a }$T_{K}^{\ast }(v(t_{0}))=T_{K}^{\ast }-t_{0}$ \textit{et} $S_{K}^{\ast
}(v(t_{0}))=v(.+t_{0}).$\textit{\ Si on suppose que }$0<T_{K}^{\ast
}-t_{0}\leq 1$\textit{\ alors}
\begin{equation}
I_{\ast }(v_{0},t_{0})\overset{\text{déf}}{=}\left( 1+\left\Vert
v(t_{0})\right\Vert _{\mathbf{n}}\right) \sup_{0<t<T_{K}^{\ast
}(v_{0})-t_{0}}\sqrt{t}\left\Vert e^{t\Delta }\left( v(t_{0})\right)
\right\Vert _{\mathbf{\infty }}>\varepsilon _{n}.  \label{E3}
\end{equation}
\end{lemma}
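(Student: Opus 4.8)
The plan is to derive Lemma \ref{Lem1} directly from the improved Kato theorem just stated, using the semigroup property of Navier-Stokes solutions together with a contrapositive argument. First I would establish the shift identities. Fix $t_0\in\,]0,T_K^\ast[$ and set $w(\tau)=v(\tau+t_0)$ for $\tau\in[0,T_K^\ast-t_0[$. Since $v$ solves (NSI) on $]0,T_K^\ast[$, a change of variable in the Duhamel integral and the identity $e^{(\tau+t_0)\Delta}v_0=e^{\tau\Delta}(e^{t_0\Delta}v_0)$ together with $v(t_0)=e^{t_0\Delta}v_0+\mathbb{L}(\mathbb{P}\nabla\cdot(u\otimes u))(t_0)$ show that $w$ satisfies (NSI) with initial data $v(t_0)$; moreover $v\in\cap_{0<T<T_K^\ast}\mathbf{L}_K^n(Q_T)$ and the $C^\infty$ smoothing guarantee $w\in\cap_{0<T<T_K^\ast-t_0}\mathbf{L}_K^n(Q_T)$ (the condition $\lim_{\tau\to0}\sqrt{\tau}\|w(\tau)\|_\infty=0$ is automatic because $w$ is continuous at $\tau=0$ with values in $\mathbf{L}^\infty$, $v(t_0)$ being smooth). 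By the uniqueness clause of Kato's theorem this forces $S_K^\ast(v(t_0))=w=v(\cdot+t_0)$ and $T_K^\ast(v(t_0))=T_K^\ast-t_0$. This handles the first two assertions.

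For the inequality \eqref{E3}, I would argue by contradiction. Suppose $0<T_K^\ast-t_0\le1$ but $I_\ast(v_0,t_0)\le\varepsilon_n$, i.e. $(1+\|v(t_0)\|_{\mathbf n})\sup_{0<t<T_K^\ast-t_0}\sqrt{t}\,\|e^{t\Delta}v(t_0)\|_{\mathbf\infty}\le\varepsilon_n$. Choosing $T=T_K^\ast-t_0$, which satisfies $T\le1$, the last clause of Kato's theorem applied to the datum $v(t_0)$ yields $T_K^\ast(v(t_0))\ge\inf(1,T)=T=T_K^\ast-t_0$. But this is not yet a contradiction — it is an equality. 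The point must be strict: one has to observe that the hypothesis of the last clause of Kato's theorem being satisfied with $T=T_K^\ast-t_0$ actually gives more, namely that the solution extends \emph{beyond} time $T_K^\ast-t_0$. The natural way to see this is that $t\mapsto\sqrt{t}\,\|e^{t\Delta}v(t_0)\|_\infty$ is continuous on $]0,\infty[$ and, since $v(t_0)\in\mathbf{L}^n$, tends to $0$ as $t\to0$; if moreover $T_K^\ast-t_0\le1$ and the supremum over $]0,T_K^\ast-t_0[$ times $(1+\|v(t_0)\|_{\mathbf n})$ is $\le\varepsilon_n$, then by continuity the same bound with the same $\varepsilon_n$ holds on a slightly larger interval $]0,T'[$ with $T'>T_K^\ast-t_0$ and still $T'\le1$ (shrinking if necessary), so Kato's theorem gives $T_K^\ast(v(t_0))\ge\inf(1,T')=T'>T_K^\ast-t_0$, contradicting $T_K^\ast(v(t_0))=T_K^\ast-t_0$ from the first part.

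The main obstacle is exactly this passage from the non-strict conclusion $T_K^\ast(v(t_0))\ge T_K^\ast-t_0$ of Kato's theorem to the strict inequality needed for a contradiction; everything rests on the fact that one can enlarge the time interval slightly while keeping the smallness constant $\varepsilon_n$ unchanged, which in turn relies on the continuity (and vanishing at $0$) of $t\mapsto\sqrt t\,\|e^{t\Delta}v(t_0)\|_{\mathbf\infty}$ — a standard heat-semigroup estimate for $\mathbf L^n$ data. A secondary technical point is checking carefully that the shifted function $w=v(\cdot+t_0)$ genuinely lies in the uniqueness class $\cap_{0<T<T_K^\ast-t_0}\mathbf L_K^n(Q_T)$, in particular the initial-time condition, which follows because $v(t_0)$ is smooth and bounded so $\sqrt\tau\,\|w(\tau)\|_\infty\to0$ trivially. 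I would present the shift identities first, then the contradiction argument, keeping the heat-estimate lemma implicit by citation to \cite{Can} and \cite{Lem}.
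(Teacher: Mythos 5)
The paper gives no written proof of this lemma (it is announced as a \emph{cons\'equence directe} of the Kato theorem), so what matters is whether your reconstruction of that consequence is sound. Your first half --- the shift identities via the change of variable in the Duhamel integral, the verification that $v(\cdot+t_0)$ lies in the Kato class, and the appeal to uniqueness --- is fine. The problem sits exactly at the step you yourself single out as the crux. You claim that if $f(t)=\sqrt{t}\,\left\Vert e^{t\Delta}v(t_0)\right\Vert _{\mathbf{\infty}}$ satisfies $(1+\left\Vert v(t_0)\right\Vert _{\mathbf{n}})\sup_{0<t<T}f(t)\le\varepsilon _{n}$ with $T=T_{K}^{\ast}-t_{0}$, then ``by continuity'' the same bound with the \emph{same} constant $\varepsilon _{n}$ persists on a slightly larger interval $]0,T^{\prime}[$. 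This is false: continuity of $f$ at $T$ only yields $f(t)\le\sup_{]0,T[}f+\eta$ for $t$ near $T$, and the supremum over a strictly larger interval can strictly exceed the supremum over $]0,T[$ (take $f$ increasing through the critical value at $t=T$). Your enlargement argument therefore only excludes the case $I_{\ast}(v_0,t_0)<\varepsilon _{n}$, where the strict inequality survives a small enlargement; it gives $I_{\ast}\ge\varepsilon _{n}$ but not the strict inequality (\ref{E3}), which is what step 1 of the paper's main proof actually uses (there the contradiction is with an estimate $I_{\ast}\le\varepsilon _{n}$). A second, independent gap is the endpoint case $T_{K}^{\ast}-t_{0}=1$: for any $T^{\prime}>T=1$ one has $\inf(1,T^{\prime})=1=T_{K}^{\ast}(v(t_0))$, so Kato's conclusion produces no contradiction however the supremum behaves, and your parenthetical ``shrinking if necessary'' collapses $T^{\prime}$ back to $T$.

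The clean way to close both gaps is not to enlarge the time interval but to use (or prove) a strict form of the last clause of Kato's theorem: under the smallness hypothesis on $]0,T[$, the fixed-point construction yields a solution on the \emph{closed} interval $[0,\inf(1,T)]$ with value in $\mathbf{L}_{\sigma}^{n}$ at the right endpoint, from which the solution can be restarted, whence $T_{K}^{\ast}(v_{0})>\inf(1,T)$ strictly. Applied to the datum $v(t_0)$ with $T=T_{K}^{\ast}-t_{0}\le 1$, this immediately contradicts $T_{K}^{\ast}(v(t_0))=T_{K}^{\ast}-t_{0}$ as soon as $I_{\ast}(v_0,t_0)\le\varepsilon _{n}$, with no continuity argument and no special treatment of $T=1$. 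As the theorem is literally stated in the paper (with the non-strict conclusion $T_{K}^{\ast}(v_{0})\ge\inf(1,T)$), the lemma does not follow as a black-box consequence, and the bridge you propose across that gap is the step that fails.
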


Le lemme suivant caractérise l'effet régularisant de l'opérateur $\mathbb{L}%
. $

\begin{lemma}
\label{Lem2}\textit{Il existe une constante }$C_{n}>0$\textit{\ telle que
pour tous }$T\in ]0,1],$ $\alpha \in \{1,2\}$\textit{\ et} $r\in \mathbb{R},$
\textit{L'opérateur }$\mathbb{L},$\textit{\ défini par }(\ref{op}),\textit{\
est continu de }$C([0,T];\mathbf{\tilde{B}}_{\infty }^{r,\infty })$ \textit{%
dans }$C([0,T];\mathbf{\tilde{B}}_{\infty }^{r+\alpha ,\infty })$ \textit{et
sa norme est inférieure à }$C_{n}T^{\frac{2-\alpha }{2}}.$
\end{lemma}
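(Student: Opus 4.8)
The proof is a frequency-by-frequency estimate based on the Littlewood-Paley decomposition together with the decay of the heat semigroup on dyadic annuli. Fix $f\in C([0,T];\mathbf{\tilde B}_{\infty}^{r,\infty})$ and $t\in[0,T]$, and denote by $(\Delta_j)_j$ the Littlewood-Paley blocks. Since $\Delta_j$ commutes with $e^{\tau\Delta}$ and with the time integral, one has
\[
\Delta_j\mathbb{L}(f)(t)=-\int_0^t e^{(t-s)\Delta}\Delta_j f(s)\,ds,\qquad
\|\Delta_j\mathbb{L}(f)(t)\|_{\mathbf{\infty}}\le\int_0^t\|e^{(t-s)\Delta}\Delta_j f(s)\|_{\mathbf{\infty}}\,ds .
\]
The first ingredient is the classical frequency-localized heat estimate (see \cite{Can},\cite{Lem}): there exist $c>0$ and $C>0$, depending only on $n$, such that $\|e^{\tau\Delta}\Delta_j g\|_{\mathbf{\infty}}\le C\,e^{-c\tau 2^{2j}}\|\Delta_j g\|_{\mathbf{\infty}}$ for $j\ge 1$ and $\tau\ge 0$, and $\|e^{\tau\Delta}\Delta_j g\|_{\mathbf{\infty}}\le C\|\Delta_j g\|_{\mathbf{\infty}}$ for $j\le 0$. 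Plugging this in reduces the whole statement to bounding, uniformly in $j$, the quantity $2^{j\alpha}\int_0^t\|e^{(t-s)\Delta}\Delta_j f(s)\|_{\mathbf{\infty}}\,ds\,\big/\,\sup_{0\le s\le t}\|\Delta_j f(s)\|_{\mathbf{\infty}}$ by $C_n T^{(2-\alpha)/2}$.

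For the low-frequency blocks $j\le 0$, the crude bound gives $\|\Delta_j\mathbb{L}(f)(t)\|_{\mathbf{\infty}}\le C\,t\,\sup_{s}\|\Delta_j f(s)\|_{\mathbf{\infty}}$; since $2^{j\alpha}\le 1$ for $j\le 0$ and $\alpha>0$, and since $T\le 1$ forces $T\le T^{(2-\alpha)/2}$ for $\alpha\in\{1,2\}$, these blocks contribute at most $C_nT^{(2-\alpha)/2}\sup_s\|f(s)\|_{\mathbf{\tilde B}_\infty^{r,\infty}}$. For $j\ge 1$ one computes the time integral explicitly,
\[
2^{j\alpha}\int_0^t e^{-c(t-s)2^{2j}}\,ds=2^{j\alpha}\,\frac{1-e^{-ct2^{2j}}}{c\,2^{2j}},
\]
and distinguishes the two cases. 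For $\alpha=2$ this equals $c^{-1}(1-e^{-ct2^{2j}})\le c^{-1}=c^{-1}T^{(2-\alpha)/2}$. For $\alpha=1$ it equals $\dfrac{1-e^{-ct2^{2j}}}{c\,2^{j}}$, and the elementary inequality $1-e^{-x}\le\min(1,x)\le\sqrt{x}$ (with $x=ct2^{2j}$) yields the bound $\dfrac{\sqrt{ct}\,2^{j}}{c\,2^{j}}=\sqrt{t/c}\le c^{-1/2}T^{1/2}=c^{-1/2}T^{(2-\alpha)/2}$. Taking the supremum over $j$ then gives $\|\mathbb{L}(f)(t)\|_{\mathbf{\tilde B}_\infty^{r+\alpha,\infty}}\le C_n\,T^{(2-\alpha)/2}\sup_{0\le s\le t}\|f(s)\|_{\mathbf{\tilde B}_\infty^{r,\infty}}$.

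It remains to check that $\mathbb{L}(f)(t)$ actually lies in the closure $\mathbf{\tilde B}_\infty^{r+\alpha,\infty}$ and that $t\mapsto\mathbb{L}(f)(t)$ is continuous. The first point follows by approximating each $f(s)$ uniformly in $s$ by Schwartz functions, using that $e^{\tau\Delta}$ preserves $S(\mathbb{R}^n)$, and invoking the operator bound just established to pass to the limit. For continuity one splits $\mathbb{L}(f)(t')-\mathbb{L}(f)(t)$ into the integral over $[t,t']$ — controlled by the above estimate with $T$ replaced by $|t'-t|$ — and the term $\int_0^t(e^{(t'-s)\Delta}-e^{(t-s)\Delta})f(s)\,ds$, which is handled by dominated convergence together with the strong continuity of the heat semigroup on $\mathbf{\tilde B}_\infty^{r+\alpha,\infty}$. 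The only genuinely delicate point is the high-frequency case, where the correct power of $T$ must be extracted from the time integral via the interpolated bound $1-e^{-x}\le\sqrt{x}$; the low-frequency behaviour (thanks to $T\le 1$) and the continuity statements are more technical but routine.
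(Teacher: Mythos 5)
The paper gives no proof of this lemma: it defers all the preliminary results to the references \cite{Can} and \cite{Lem}. Your argument is precisely the standard one found there --- commute $\Delta_j$ with the heat semigroup, use the exponential decay $\|e^{\tau\Delta}\Delta_j g\|_{\infty}\leq Ce^{-c\tau 2^{2j}}\|\Delta_j g\|_{\infty}$ on high-frequency blocks and the crude $L^{\infty}$ bound on the low-frequency block, then integrate in time, with the interpolation $1-e^{-x}\leq\sqrt{x}$ producing the factor $T^{(2-\alpha)/2}$ when $\alpha=1$ and $T\leq 1$ absorbing the low frequencies --- and it is correct, including the density and continuity checks needed to land in $C([0,T];\mathbf{\tilde{B}}_{\infty }^{r+\alpha ,\infty })$.
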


Pour énoncer le dernier lemme, nous introduisons la définition d'une \textit{%
version affaiblie} du para-produit vectoriel de Bony. Soient $f$ et $g$ :$%
\mathbb{R}^{n}\rightarrow \mathbb{R}^{n}$ deux fonctions. On définit,
formellement, les deux opérateurs bilinéaires $\pi _{0}$ et $\pi _{1}$ par $%
\pi _{i}(f\otimes g)=\sum_{k=0}^{\infty }S_{k+i}(f)\otimes \Delta _{k}(g),$ $%
i=0,1.$ Rappelons que dans le cas où $f$ et $g$ sont dans $S(\mathbb{R}%
^{n})^{n}$ on a bien l'identité $f\otimes g=\pi _{0}(f\otimes g)+\pi
_{1}(g\otimes f).$

\begin{lemma}
\label{Lem3}\textit{Soit }$s>1.$\textit{\ Il existe une constante positive }$%
C_{n,s}$\textit{\ telle que les opérateurs }$\pi _{0}$\textit{\ et }$\pi
_{1} $\textit{\ sont continus de }$\mathbf{\tilde{B}}_{\infty }^{-1,\infty
}\times \mathbf{\tilde{B}}_{\infty }^{1+s,\infty }$ (\textit{resp. }$\mathbf{%
C}_{0}(\mathbb{R}^{n})\times \mathbf{\tilde{B}}_{\infty }^{1+s,\infty })$
\textit{dans} $\mathbf{\tilde{B}}_{\infty }^{s,\infty }$ (\textit{resp.} $%
\mathbf{\tilde{B}}_{\infty }^{1+s,\infty })$ \textit{et leurs normes sont inf%
érieures à }$C_{n,s}.$ \textit{En particulier si }$f$\textit{\ et }$g$%
\textit{\ sont dans} $\mathbf{\tilde{B}}_{\infty }^{1+s,\infty }$ \textit{%
alors} $f\otimes g=\pi _{0}(f\otimes g)+\pi _{1}(g\otimes f)$ \textit{dans} $%
\mathbf{\tilde{B}}_{\infty }^{1+s,\infty }.$
\end{lemma}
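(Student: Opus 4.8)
The plan is to treat $\pi_{0}$ and $\pi_{1}$ simultaneously, since they differ only by the innocuous shift $S_{k}\leftrightarrow S_{k+1}$ in the low-frequency factor, and to reduce both mapping properties to the classical summation principle for dyadically localised pieces: if $(u_{k})_{k\geq 0}$ are tempered distributions with $\widehat{u_{k}}$ supported in balls $B(0,R\,2^{k})$, $R$ fixed, and if $M:=\sup_{k}2^{k\sigma}\Vert u_{k}\Vert_{\infty}<\infty$ for some $\sigma>0$, then $\sum_{k}u_{k}$ converges in $S^{\prime}(\mathbb{R}^{n})$ and belongs to $\mathbf{B}_{\infty}^{\sigma,\infty}$ with norm $\leq C(R,\sigma)\,M$; if moreover $2^{k\sigma}\Vert u_{k}\Vert_{\infty}\to 0$ and each $u_{k}$ lies in $\mathbf{\tilde{B}}_{\infty}^{\sigma,\infty}$, then so does the sum. (This rests only on Bernstein's lemma, on the fact that $\Delta_{q}u_{k}=0$ once $2^{q}>C\,R\,2^{k}$, and on the convergence of $\sum_{k\geq q-N}2^{-k\sigma}$, which holds precisely because $\sigma>0$; see \cite{Can,Lem}.) Writing $u_{k}:=S_{k+i}(f)\otimes\Delta_{k}(g)$, the Fourier support of $u_{k}$ lies in the Minkowski sum of the support of $\widehat{S_{k+i}(f)}$ (a ball of radius $\lesssim 2^{k}$) and of $\widehat{\Delta_{k}(g)}$ (an annulus of size $2^{k}$), hence in a single ball $B(0,C\,2^{k})$ --- a ball, not an annulus, because the $\pi_{i}$'s leave no spectral gap between the two factors; this is exactly why the output regularity index must be kept strictly positive.

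For the first assertion, take $f\in\mathbf{\tilde{B}}_{\infty}^{-1,\infty}$, $g\in\mathbf{\tilde{B}}_{\infty}^{1+s,\infty}$. From $\Vert S_{m}(f)\Vert_{\infty}\leq\sum_{j\leq m-1}\Vert\Delta_{j}f\Vert_{\infty}\leq\big(\sup_{j}2^{-j}\Vert\Delta_{j}f\Vert_{\infty}\big)\sum_{j\leq m-1}2^{j}\leq C\,2^{m}\Vert f\Vert_{\mathbf{B}_{\infty}^{-1,\infty}}$ (applied with $m=k$ and $m=k+1$) and $\Vert\Delta_{k}(g)\Vert_{\infty}\leq 2^{-k(1+s)}\Vert g\Vert_{\mathbf{B}_{\infty}^{1+s,\infty}}$ one obtains the core estimate $\Vert u_{k}\Vert_{\infty}\leq C\,2^{-ks}\Vert f\Vert_{\mathbf{B}_{\infty}^{-1,\infty}}\Vert g\Vert_{\mathbf{B}_{\infty}^{1+s,\infty}}$. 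Since $s>1>0$, the summation principle with $\sigma=s$ gives $\pi_{i}(f\otimes g)\in\mathbf{B}_{\infty}^{s,\infty}$ with norm $\leq C_{n,s}\Vert f\Vert_{\mathbf{B}_{\infty}^{-1,\infty}}\Vert g\Vert_{\mathbf{B}_{\infty}^{1+s,\infty}}$. To place the sum in $\mathbf{\tilde{B}}_{\infty}^{s,\infty}$ one notes that $g\in\mathbf{\tilde{B}}_{\infty}^{1+s,\infty}$ forces $2^{k(1+s)}\Vert\Delta_{k}g\Vert_{\infty}\to 0$, hence $2^{ks}\Vert u_{k}\Vert_{\infty}\to 0$, and that each $u_{k}$ --- having compact spectrum and being the limit in $\mathbf{B}_{\infty}^{s,\infty}$ of $S_{k+i}(f_{m})\otimes\Delta_{k}(g_{m})\in S(\mathbb{R}^{n})^{n}$ along approximating sequences $f_{m}\to f$, $g_{m}\to g$ --- belongs to $\mathbf{\tilde{B}}_{\infty}^{s,\infty}$; so does the sum. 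The variant with $f\in\mathbf{C}_{0}(\mathbb{R}^{n})$ is identical, only replacing the bound on $\Vert S_{m}(f)\Vert_{\infty}$ by $\Vert S_{m}(f)\Vert_{\infty}\leq C\Vert f\Vert_{\infty}$ (convolution with an $L^{1}$-normalised kernel): then $\Vert u_{k}\Vert_{\infty}\leq C\,2^{-k(1+s)}\Vert f\Vert_{\infty}\Vert g\Vert_{\mathbf{B}_{\infty}^{1+s,\infty}}$, and the summation principle with $\sigma=1+s>0$ gives $\pi_{i}(f\otimes g)\in\mathbf{\tilde{B}}_{\infty}^{1+s,\infty}$ with norm $\leq C_{n,s}\Vert f\Vert_{\infty}\Vert g\Vert_{\mathbf{B}_{\infty}^{1+s,\infty}}$.

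For the decomposition identity, let $f,g\in\mathbf{\tilde{B}}_{\infty}^{1+s,\infty}$. Since $1+s>0$ there is a continuous inclusion $\mathbf{\tilde{B}}_{\infty}^{1+s,\infty}\hookrightarrow\mathbf{C}_{0}(\mathbb{R}^{n})$, so the $\mathbf{C}_{0}$-variant just proved shows that $\pi_{0}(f\otimes g)$ and $\pi_{1}(g\otimes f)$ belong to $\mathbf{\tilde{B}}_{\infty}^{1+s,\infty}$ and that $(f,g)\mapsto\pi_{0}(f\otimes g)$ and $(f,g)\mapsto\pi_{1}(g\otimes f)$ are continuous from $\mathbf{\tilde{B}}_{\infty}^{1+s,\infty}\times\mathbf{\tilde{B}}_{\infty}^{1+s,\infty}$ into $\mathbf{\tilde{B}}_{\infty}^{1+s,\infty}$; the pointwise product $(f,g)\mapsto f\otimes g$ is continuous there too, $\mathbf{B}_{\infty}^{1+s,\infty}$ being a Banach algebra for $1+s>0$. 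As the identity $f\otimes g=\pi_{0}(f\otimes g)+\pi_{1}(g\otimes f)$ holds on $S(\mathbb{R}^{n})^{n}\times S(\mathbb{R}^{n})^{n}$ (recalled just before the statement), which is dense in $\mathbf{\tilde{B}}_{\infty}^{1+s,\infty}\times\mathbf{\tilde{B}}_{\infty}^{1+s,\infty}$, it extends to the whole space by continuity.

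The main obstacle is organisational rather than analytical: once one sees that $S_{k+i}(f)\otimes\Delta_{k}(g)$ is localised in a \emph{ball} $B(0,C\,2^{k})$, everything reduces to the standard paraproduct estimate. The two points needing attention are (i) keeping the output index strictly positive --- this is why $s>1$ (in fact $s>0$ would suffice) is assumed, so that the geometric series in the summation principle converges --- and (ii) upgrading membership in $\mathbf{B}_{\infty}^{\sigma,\infty}$ to membership in its closed subspace $\mathbf{\tilde{B}}_{\infty}^{\sigma,\infty}$, for which one uses both that the partial sums converge in norm (because $2^{k\sigma}\Vert u_{k}\Vert_{\infty}\to 0$, the high-frequency factor being in $\mathbf{\tilde{B}}_{\infty}$) and that each building block $S_{k+i}(f)\otimes\Delta_{k}(g)$ already lies in the closure of $S(\mathbb{R}^{n})^{n}$.
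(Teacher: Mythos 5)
Your proof is correct: the core estimates $\left\Vert S_{k+i}(f)\right\Vert _{\infty }\leq C\,2^{k}\left\Vert f\right\Vert _{\mathbf{B}_{\infty }^{-1,\infty }}$ (resp. $\leq C\left\Vert f\right\Vert _{\infty }$), the localisation of $S_{k+i}(f)\otimes \Delta _{k}(g)$ in a ball $B(0,C2^{k})$, and the summation principle for positive regularity indices are exactly the standard paraproduct argument. The paper itself gives no proof of this lemma --- it defers to \cite{Can} and \cite{Lem} --- and your argument is essentially the one found there, with the additional (and correctly handled) care needed to land in the closure $\mathbf{\tilde{B}}_{\infty }^{\sigma ,\infty }$ of $S(\mathbb{R}^{n})^{n}$ and to extend the Bony identity by density.
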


\section{Démonstration du Théorème}

Nous partageons la démonstration en trois étapes.

\noindent 1$^{\grave{e}re}$ étape. Montrons que $T^{\ast }=T_{K}^{\ast
}(u_{0})$ et que $u=S_{K}^{\ast }(u_{0})$ (il en résulte, en particulier,
que $u\in C^{\infty }(Q_{T^{\ast }})).$ En vertu de l'unicité des solutions
des équations de Navier-Stokes dans $C([0,T];\mathbf{L}^{n})$ \cite{Fur},
nous avons $T^{\ast }\geq T_{K}^{\ast }(u_{0})$ et $u=S_{K}^{\ast }(u_{0})$
sur $[0,T_{K}^{\ast }(u_{0})[.$ On conclut alors dès qu'on prouve que $%
T^{\ast }\leq T_{K}^{\ast }(u_{0}).$ Supposons que $T_{K}^{\ast
}(u_{0})<T^{\ast }.$ Alors l'ensemble $S_{K}^{\ast }(u_{0})\left(
[0,T_{K}^{\ast }(u_{0})[\right) =u\left( [0,T_{K}^{\ast }(u_{0})[\right) $
est un précompact de $\mathbf{L}^{n}.$ Utilisant ensuite le fait que pour
toute $f\in \mathbf{L}^{n},$ sup$_{0<s<1}\sqrt{s}\left\Vert e^{s\Delta
}f\right\Vert _{\infty }\leq c_{n}\left\Vert f\right\Vert _{n}$ et $%
\lim_{s\rightarrow 0}\sqrt{s}\left\Vert e^{s\Delta }f\right\Vert _{\infty
}=0,$ on montre qu'il existe $\lambda \in ]0,1[$ tel que, pour tout $%
t_{0}\in \lbrack 0,T_{K}^{\ast }(u_{0})[,$ on a
\begin{equation*}
\left( 1+\left\Vert S_{K}^{\ast }(u_{0})(t_{0})\right\Vert _{\mathbf{n}%
}\right) \sup_{0<t<\lambda }\sqrt{t}\left\Vert e^{t\Delta }S_{K}^{\ast
}(u_{0})(t_{0})\right\Vert _{\mathbf{\infty }}\leq \varepsilon _{n}.
\end{equation*}%
Prenons $t_{0}$ tel que $0<T_{K}^{\ast }(u_{0})-t_{0}<\lambda ,$ il vient $%
I_{\ast }(u_{0},t_{0})\leq \varepsilon _{n},$ ce qui est absurde d'après (%
\ref{E3}).

\noindent 2$^{\grave{e}me}$ étape. Montrons que pour tout $0<a<T^{\ast },$ $%
u\notin L^{\infty }([a,T^{\ast }[,L^{\infty }).$ On raisonne par l'absurde
et on pose $v_{0}=u(a)$ et $v=S_{K}^{\ast }(v_{0}).$ Alors, d'après le Lemme %
\ref{Lem1} et l'étape précédente, on a $M\overset{\text{déf}}{=}%
\sup_{0<t<T_{K}^{\ast }(v_{0})}\left\Vert v(t)\right\Vert _{\infty }<\infty
. $ Un calcul élémentaire utilisant le lemme de Gronwall, l'inégalité de
Young et le fait que le noyau de l'opérateur $e^{t\Delta }\mathbb{P}\nabla $
est dans $L^{1}(\mathbb{R}^{n})^{n\times n}$ et que sa norme est inférieure à
$\frac{C}{\sqrt{t}},$ nous permet de prouver que $N\overset{\text{déf}}{=}%
\sup_{0<t<T_{K}^{\ast }(v_{0})}\left\Vert v(t)\right\Vert _{n}$ est fini$.$
Par conséquent, pour tout $t_{0}\in \lbrack 0,T_{K}^{\ast }(v_{0})[,$ on a
\begin{equation*}
I_{\ast }(v_{0},t_{0})\leq C(1+N)\,M\sqrt{T_{K}^{\ast }(v_{0})-t_{0}},
\end{equation*}%
ce qui contredit (\ref{E3}).

\noindent 3$^{\grave{e}me}$ étape. Soit $\varepsilon >0$ pour lequel on
suppose qu'il existe $\omega \in S(\mathbb{R}^{n})^{n}$ vérifiant
\begin{equation*}
\overline{\lim_{t\rightarrow T^{\ast }}}\left\Vert u(t)-\omega \right\Vert _{%
\mathbf{B}_{\infty }^{-1,\infty }}<\varepsilon .
\end{equation*}%
Il existe alors $\delta _{0}\in ]0,T^{\ast }[$ tel que pour tout $t\in
\lbrack T^{\ast }-\delta ,T^{\ast }[$ on a $\left\Vert u(t)-\omega
\right\Vert _{\mathbf{B}_{\infty }^{-1,\infty }}<\varepsilon .$ Soit $\delta
\in ]0,\delta _{0}[$ un réel à fixer ultérieurement. On pose $%
w_{0}=u(T^{\ast }-\delta ),$ alors, d'après la 1$^{\grave{e}re}$ étape et le
Lemme \ref{Lem1}, $T_{K}^{\ast }(w_{0})=\delta $ et $w=S_{K}^{\ast
}(w_{0})=u(.+$ $T^{\ast }-\delta ).$ Par conséquent $\sup_{0<t<\delta
}\left\Vert w(t)-\omega \right\Vert _{\mathbf{B}_{\infty }^{-1,\infty
}}<\varepsilon .$ Soit $s>0.$ Le Théorème de Kato assure que $w\in
C([0,\delta ];\mathbf{\tilde{B}}_{\infty }^{s+1,\infty }),$ il en résulte,
d'après le Lemme \ref{Lem3},
\begin{equation*}
w(t)=e^{t\Delta }w_{0}+\sum_{j=0}^{1}\mathbb{L}\left( \mathbb{P}\nabla .\pi
_{j}\left[ (w-\omega )\otimes w\right] \right) +\mathbb{L}\left( \mathbb{P}%
\nabla .\pi _{j}\left[ \omega \otimes w\right] \right) (t).
\end{equation*}%
Utilisons encore le Lemme \ref{Lem3} et le fait que $\mathbb{P}\nabla $ est
continue de $\mathbf{\tilde{B}}_{\infty }^{r,\infty }$ dans $\mathbf{\tilde{B%
}}_{\infty }^{r-1,\infty }(r\in \mathbb{R}),$ on trouve que, pour tout $%
\delta ^{\prime }\in ]0,\delta \lbrack ,$ on a
\begin{equation*}
\sup_{0<t<\delta ^{\prime }}\left\Vert w(t)\right\Vert _{\mathbf{\tilde{B}}%
_{\infty }^{s+1,\infty }}\leq \left\Vert w_{0}\right\Vert _{\mathbf{\tilde{B}%
}_{\infty }^{s+1,\infty }}+C\{\varepsilon +\left\Vert \omega \right\Vert
_{\infty }\sqrt{\delta }\}\sup_{0<t<\delta ^{\prime }}\left\Vert
w(t)\right\Vert _{\mathbf{\tilde{B}}_{\infty }^{s+1,\infty }},
\end{equation*}%
où $C$ est une constante positive qui ne dépend que de $n$ et $s.$ Supposons
par l'absurde que $\varepsilon <\varepsilon _{\ast }\overset{\text{déf}}{=}%
\frac{1}{4C}.$ Choisissons, maintenant, $\delta $ tel que la quantité $%
C\{\varepsilon +\left\Vert \omega \right\Vert _{\infty }\sqrt{\delta }\}$
soit inférieure à $\frac{1}{2},$ on obtient que le $\sup_{0<t<\delta
}\left\Vert w(t)\right\Vert _{\mathbf{\tilde{B}}_{\infty }^{s+1,\infty }}$
est majoré par $2\left\Vert w_{0}\right\Vert _{\mathbf{\tilde{B}}_{\infty
}^{s+1,\infty }}.$ Rappelons que $\mathbf{\tilde{B}}_{\infty }^{s+1,\infty }$
s'injecte dans $\mathbf{L}^{\infty }$ et que $w=u(.+$ $T^{\ast }-\delta ),$
il vient que $\sup_{T^{\ast }-\delta <t<T^{\ast }}\left\Vert u(t)\right\Vert
_{\infty }$ est fini, ce qui est impossible d'après l'étape précédente.
Donc, pour toute $\omega \in S(\mathbb{R}^{n})^{n},$%
\begin{equation*}
\overline{\text{ }\lim_{t\rightarrow T^{\ast }}}\left\Vert u(t)-\omega
\right\Vert _{\mathbf{B}_{\infty }^{-1,\infty }}\geq \varepsilon _{\ast }.
\end{equation*}%
Par densité, cette dernière estimation reste vraie pour toute $\omega \in
\mathbf{\tilde{B}}_{\infty }^{-1,\infty }.$ \bigskip
\providecommand{\bysame}{\leavevmode\hbox
to3em{\hrulefill}\thinspace}



\begin{thebibliography}{99}
\bibitem{Can} M. Cannone, \textit{Ondelettes, paraproduit et Navier-Stokes},
Diderot Editeur, Paris, 1995.

\bibitem{Esc} L. Escauriaza, G. Seregin et V. \v{S}verák, On $L_{3,\infty }$%
-Solutions to the Navier-Stokes Equations and Backward Uniqueness. \textit{%
Peprint.}

\bibitem{Gig} Y. Giga, Solutions for semilinear parabolic equations in $%
L^{p} $ and regularity of weak solutions of the Navier-Stokes system.
\textit{J. Differentiel Eq}., 62 (1986), 182-212.

\bibitem{Fur} G. Furioli, P.G. Lemarié-Rieusset et E. Terraneo, Sur l'unicité
dans $L^{3}(R^{3})$ des solutions mild des équations de Navier-Stokes.
\textit{C. R. Acad. Sci. Paris,} Serie 1, 325 (1997), 1253-1256.

\bibitem{Kat} ] T. Kato, Strong $L^{p}$-solutions of the Navier-Stokes
equation in $\mathbb{R}^{m},$ with applications to weak solutions. \textit{%
Math. Z.,} 187 (1984), 471-480.

\bibitem{Koz} H. Kozono and H. Sohr, Regularity criterion on weak solutions
to the Navier-Stokes equations. \textit{Adv. Diff. Eq}. 2 (1997), 535-554.

\bibitem{Lem} P.G. Lemarié-Rieusset, \textit{Recent developments in the
Navier-Stokes problem.} CRC Press, 2002.

\bibitem{May} R. May, \textit{Existence, unicité et régularité des solutions
faibles des équations de Navier-Stokes.} Thèse, Univ. Evry, (2002).

\bibitem{Mey} Y. Meyer, \textit{Wavelets, paraproducts and Navier-Stokes
equations}, Current developments in mathematics 1996, International Press,
PO Box 38-2872, Cambridge, MA 02238-2872, 1999.

\bibitem{Sho} H. Sohr and W. Von Wahl, On the singular set and the
uniqueness of weak solutions of the Navier-Stokes equations. \textit{%
Manuscripta Math.} 49 (1984), 27-59.
\end{thebibliography}

\providecommand{\bysame}{\leavevmode\hbox
to3em{\hrulefill}\thinspace} \providecommand{\MR}{\relax\ifhmode\unskip%
\space\fi MR }
\providecommand{\MRhref}[2]{
\href{http://www.ams.org/mathscinet-getitem?mr=#1}{#2} } \providecommand{%
\href}[2]{#2}

\end{document}